\newcommand{\vc}[1]{\boldsymbol{#1}}
\newcommand{\calz}{\mathcal Z}
\newcommand{\cqfd}{\hfill $\square$}
\newcommand{\spc}{\textrm{sp}}
\newcommand{\bs}{\boldsymbol}
\shorttitle{Branching processes with infinitely many types} 
\begin{document}

\title{Extinction probabilities of branching processes with countably infinitely many types} 

\authorone[The University of Melbourne]{S. Hautphenne} 
\authortwo[Universit\'e libre de Bruxelles]{G. Latouche} 

\authorthree[The University of Adelaide]{G. T. Nguyen} 

\addressone{Department of Mathematics and Statistics, The University of Melbourne, VIC 3010, Australia.  sophiemh@unimelb.edu.au.} 

\addresstwo{D\' epartement d'Informatique, Universit\'{e} libre de Bruxelles 1050 Brussels, Belgium. latouche@ulb.ac.be.}

\addressthree{School of Mathematical Sciences, The University of Adelaide, 5005, Australia. \newline giang.nguyen@adelaide.edu.au.}

\begin{abstract}
We present two iterative methods for computing the global and partial extinction probability vectors for Galton-Watson processes with countably infinitely many types. The probabilistic interpretation of these methods involves truncated Galton-Watson processes with finite sets of types and modified progeny generating functions. In addition, we discuss the connection of  the convergence norm of the mean progeny matrix with extinction criteria. Finally, we give a sufficient condition for a population to become extinct almost surely even though its population size explodes on the average, which is impossible in a branching process with finitely many types. We conclude with some numerical illustrations for our algorithmic methods.
\end{abstract}

\keywords{multitype branching process; extinction probability; extinction criteria; iterative methods   } 

\ams{60J80}{60J05;60J22;65H10} 

\section{Introduction} 

Branching processes are powerful mathematical tools frequently used to study the evolution of collections of individuals over time. In particular, multi-type Galton-Watson processes represent populations in which individuals are classified into different categories and live for one unit of time. Each individual may reproduce at the end of its lifetime, with reproduction rules  dependent on its type. 

When the number of types is finite, one extinction criterion is based on the spectral radius $\spc(M)$ of the mean progeny matrix $M$, the elements $M_{ij}$ of which are the expected number of direct offsprings with type $j$ for a parent of type~$i$. 
Moreover, the extinction probability vector $\bs{q}$ is the minimal nonnegative solution of the fixed-point equation $\bs{q} = \bs P(\bs{q})$, where each component $q_i$ is the extinction probability given the initial type~$i$, and $\bs P(\cdot)$ is the progeny generating function of the process. Harris~\cite{harris63} and references therein present a comprehensive analysis of  extinction criteria and extinction probability for Galton-Watson processes with finitely many types.

To allow, as we do here, the set of types to be infinite gives rise to three main challenges. First, as the mean progeny matrix $M$ has infinite dimension, one has to look for a replacement to the spectral radius as an extinction criterion. Second, one needs to determine how to compute the extinction probability vector $\bs{q}$ which now has infinitely many entries. Third, the concept of extinction has to be defined carefully: when there are infinitely many types, it is possible for every type to eventually disappear while the whole population itself explodes.  We use the term \emph{global extinction} to indicate that the whole population becomes extinct, and represent by $\bs q$ the probability vector for this event; we refer to the event that every type becomes extinct as \emph{partial extinction}, and denote its probability vector by $\tilde{\bs q}$, with $\bs q \leq \tilde{\bs q}$, naturally, and the question is whether they are equal or not.

Galton-Watson processes with infinitely many types have been much investigated already.  Moyal~\cite{moyal62} assumes that the types belong to an abstract space and proves that the extinction probability is a solution of the fixed point equation $\bs s = \bs{P}(\bs s)$.  Mode~\cite[Theorem~7.2]{mode71}, for a restricted family of progeny densities, gives an extinction criterion based on the spectral radius of some integral operator.  Focusing on denumerably infinite sets of types,  Moy~\cite{moy66,moy67} and Spataru~\cite{spataru89} use ergodic properties for infinite matrices, and analyse in special cases the role of the \emph{convergence norm} of $M$ as an extinction criterion. Recently, some authors in the literature of branching random walks have defined \emph{local survival}, meaning that for every given type $i$ and arbitrarily large epoch $T$ there is at least one individual of  type $i$ alive at some time $t > T$, with \emph{global survival} meaning that at least one individual is alive at any time, and \emph{strong local survival}, when the two have the same  probability.  We refer to Bertacchi and Zucca~\cite{bertacchi09},  Zucca~\cite{zucca11}, and to~Gantert \emph{et al}.~\cite{gantert10}.

There is, however, no simple general extinction criteria for Galton-Watson processes with countably infinitely many types so far, and the question of actually computing the extinction probability vector has received scant attention, if any.

Our main result is the development of two algorithmic methods for computing the global and the local extinction probability vectors $\bs{q}$ and $\tilde{\bs{q}}$. The methods, which are presented in Section~\ref{sec:algos},
have a physical interpretation based on two truncated Galton-Watson processes with finite sets of types.
They may be applied to both irreducible and reducible branching processes with countably infinitely many types.

In Section~\ref{sec:superc} we discuss some extinction criteria expressed in terms of the convergence norm of the  mean progeny matrix $M$ in the irreducible case, or of irreducible sub-matrices of $M$ when $M$ is reducible.  We also give a sufficient condition under which the population becomes extinct almost surely while its expected size tends to infinity. That condition implies that the asymptotic growth rate of the process may depend on the distribution of the initial individual's type. 

In Section~\ref{sec:rand}, we provide some numerical illustrations.  Our examples are taken from two classes of processes for which the matrix $M$ is  tridiagonal (and irreducible) or super-diagonal (and reducible).

\section{Preliminaries} \label{sec:model} 

Consider the process $\{\mathcal{Z}_n=(Z_{n1}, Z_{n2}, \ldots)\}_{n\in
  \mathds{N}}$, where $Z_{n{\ell}}$ is the number of individuals of
type $\ell$ alive at the $n$th generation,
for $\ell$ in the countably infinite set of types $\mathcal{S}= \{1,
2, 3, \ldots\}$.
%
Unless otherwise stated, the process starts in generation~0 with one individual. 

We denote by $p_{i\bs{j}}$ for $\bs{j} = (j_1, j_2, \ldots)$ the
probability that an individual of type~$i$ gives birth to~$j_1$
children of type~1, $j_2$ children of type~2, etc.,
and the progeny generating function
$P_i(\boldsymbol{s})$ of type $i\in \mathcal{S}$ is given by
\begin{align*}
P_i(\bs{s}) = \sum_{\bs j \in \mathds{N}^{\infty}}p_{i\bs{j}} \bs{s}^{\bs{j}} = \sum_{\bs j \in \mathds{N}^{\infty}}p_{i\bs{j}} \prod_{k = 1}^{\infty} s_k^{j_k},
\end{align*}  
with $\bs{s} = (s_1, s_2, \ldots)$, $s_i \in [0,1]$ for all $i$.  We  define $\bs P(\bs{s}) = (P_1(\bs{s}), P_2(\bs{s}), \ldots)$. The {mean progeny matrix} $M$ is defined by
\[
M_{ij} = \left.\frac{\partial P_i(\bs{s})}{\partial s_j} \right |_{\bs{s} = \bs{1}}  \qquad \mbox{for $i,j \in \mathcal S$},
\]
and $M_{ij}$ is the expected number of direct offspring of type $j$ born to a parent of type~$i$. The process $\{\mathcal{Z}_n\}$ is said to be {irreducible} if $M$ is irreducible, and it is {reducible} otherwise.  

The total population size at the $n$th generation is $|\mathcal{Z}_{n}| = \sum_{\ell = 1}^{\infty} {Z}_{n\ell}$, and we denote by $\varphi_0$ the type of the first individual in generation~0.  The conditional {\em global} extinction probability vector, given the initial type, is $\vc{q} = (q_1,q_2, \ldots)$ where
\begin{align*} 
q_{i} & =   \mathds{P}[\lim_{n\rightarrow \infty}\left. |\mathcal{Z}_{n} |= {0} \right| \varphi_0 = {i}] \qquad \mbox{ for } i \in \mathcal{S}.
\end{align*} 
This is the usual conditional probability that the whole population eventually becomes extinct, given the type of the initial individual, and we write that 
$\vc q = \mathds{P}[\lim_{n\rightarrow \infty} \left. |\mathcal{Z}_{n}| = {0} \right| \varphi_0]$ for short.  The vector $\vc q$ is the minimal nonnegative solution of the fixed-point equation
\begin{align}\label{exteq}
\vc{P}(\vc{s}) = \vc{s}.
\end{align}
This equation has at most two distinct solutions, $\vc 1$ and $\vc q\leq \vc 1$, if $M$ is irreducible, and potentially infinitely many solutions otherwise (Moyal~\cite{moyal62}, Spataru~\cite{spataru89}).

The conditional \emph{partial} extinction probability, given the initial type, is $\tilde{\vc{q}} = (\tilde{q}_1, \tilde{q}_2, \ldots) $ where
\begin{align*} 
\tilde{q}_{i} & =  \mathds{P}[\forall \ell: \lim_{n\rightarrow \infty} \left. {Z}_{n\ell} = {0} \right| \varphi_0 = {i}] \qquad \mbox{ for } i \in \mathcal{S}. 
\end{align*}
In the irreducible case, Zucca~\cite{zucca11} observes that  $\lim_{n\rightarrow \infty}  {Z}_{n\ell} = {0}$ for all types~$\ell$ if and only if the limit is zero for at least one type, regardless of the initial type.

The  vector $\tilde{\vc q}$  is also a solution of \eqref{exteq}. Indeed, by conditioning on the progeny of the initial individual and using the independence between individuals, we readily obtain, for any $i$,
\begin{align*}
\tilde{q}_i & = \mathds{P}[\forall \ell: \lim_{n\rightarrow \infty} \left. {Z}_{n\ell} = {0} \right| \varphi_0 = {i}] \\
 & = \sum_{\bs j=(j_1,j_2,\ldots) }p_{i\bs{j}} \prod_{k = 1}^{\infty} \mathds{P}[\forall \ell: \lim_{n\rightarrow \infty} \left. {Z}_{n\ell} = {0} \right| \varphi_0 = {k}]^{j_k} \\
 & = P_i(\tilde{\vc{q}}).
 \end{align*}
    
When the set of types is finite, global and partial extinction are equivalent, but this is not necessarily the case when the set of types is infinite: by Fatou's Lemma:
\begin{align*}
\lim_{n\rightarrow \infty} |\mathcal{Z}_{n} |=\lim_{n\rightarrow \infty} \sum_{\ell=1}^{\infty}  {Z}_{n\ell} \geq \sum_{\ell=1}^{\infty}  \lim_{n\rightarrow \infty}{Z}_{n\ell},
\end{align*}
so that 
\begin{align*} 
\mathds{P}[\lim_{n\rightarrow \infty} |\mathcal{Z}_{n} |=0| \varphi_0 = i] & \leq \mathds{P}[\forall \ell: \lim_{n\rightarrow \infty}{Z}_{n\ell}=0| \varphi_0 = i],
\end{align*}
 for $i,\ell \in \mathcal{S}$
and $\vc0\leq \vc{q}\leq \tilde{\vc{q}} \leq \vc 1$. 

As the vectors $\vc q$, $\tilde{\vc q}$ and $\vc 1$ are all solutions of (\ref{exteq}) and since there are at most two distinct solutions in the irreducible case, the following lemma is immediate.

\begin{lem}\label{lem1} If $M$ is irreducible, and $\tilde{\vc{q}}<\vc 1$, then  $\vc q=\tilde{\vc{q}}$.\cqfd \end{lem}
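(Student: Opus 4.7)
The plan is to chain together three observations already established in the preceding exposition, so the argument is essentially a pigeonhole on the solution set of the fixed-point equation $\bs P(\bs s)=\bs s$.

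First, I would recall that in the irreducible case, the authors have cited (via Moyal and Spataru) that \eqref{exteq} admits at most two distinct solutions on $[\bs 0,\bs 1]$, namely $\bs 1$ and the minimal nonnegative solution $\bs q$. Second, I would invoke the calculation immediately above the lemma showing that $\tilde{\bs q}$ is also a solution of \eqref{exteq}, together with the Fatou-Lemma-based ordering $\bs 0\leq \bs q\leq \tilde{\bs q}\leq \bs 1$.

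Now the hypothesis $\tilde{\bs q}<\bs 1$ (interpreted as $\tilde{\bs q}\neq \bs 1$) rules out one of the two possible solutions. Since $\tilde{\bs q}$ is a solution distinct from $\bs 1$, and since there is at most one such, $\tilde{\bs q}$ must coincide with the unique non-$\bs 1$ solution, which is $\bs q$. Thus $\bs q=\tilde{\bs q}$.

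There is essentially no obstacle here; the only thing to be slightly careful about is the interpretation of the inequality $\tilde{\bs q}<\bs 1$. Whether this is read as componentwise strict or merely as ``$\leq$ and not equal,'' the conclusion follows in the same way, because the dichotomy provided by irreducibility only distinguishes $\bs 1$ from the minimal solution $\bs q$. I would write a two-line proof to that effect, with a brief appeal to the bracketing $\bs q\leq\tilde{\bs q}\leq \bs 1$ to exclude the case $\tilde{\bs q}=\bs 1$ from the hypothesis and conclude $\tilde{\bs q}=\bs q$.
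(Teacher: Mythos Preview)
Your proposal is correct and matches the paper's own argument essentially verbatim: the paper simply notes that $\bs q$, $\tilde{\bs q}$, and $\bs 1$ are all solutions of \eqref{exteq}, that the irreducible case admits at most two distinct solutions, and declares the lemma immediate. Your additional remark about the interpretation of $\tilde{\bs q}<\bs 1$ is a reasonable clarification but not needed for the paper's purposes.
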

In the irreducible case, $\vc q = \tilde{\vc q}<\vc1$ is equivalent to strong local survival in the terminology of branching random walks and, although it is expressed differently, this sufficient condition is observed in~\cite{gantert10} with the assumption that $M$ is tridiagonal, and in \cite{zucca11} for the general case.  When $M$ is reducible, it is possible that $\vc{q}< \tilde{\vc{q}} < \vc1$, and we give an example at the end of Section~\ref{sec:rand}.

\section{Computational aspects} \label{sec:algos} 

In this section, we develop iterative methods to compute the extinction probability vectors $\vc q$ and $\tilde{\vc{q}}$. The procedures apply for both irreducible and reducible processes.  The underlying idea is to compute approximations of the infinite vectors $\vc q$ and $\tilde{\vc{q}}$ by solving finite systems of equations in such a way that the successive approximations have probabilistic interpretations: for $\vc q$ we use a time-truncation argument, and for $\tilde{\vc q}$ a space-truncation argument.

\subsection{Global extinction probability}
  \label{sec:global}
Denote by $N_e$ the generation number when the process becomes extinct.  Clearly, 
$\vc{q} = \mathds{P}[{N_e} <\infty\,|\varphi_0]$.
Let $T_k=\{k + 1, k + 2, \ldots\}$ be the set of types strictly greater than $k$, and define the first passage time
$\tau_k=\inf\{n: \sum_{\ell\in T_k}Z_{n\ell}>0\}$, for $k \geq 0$.  This is the first generation when an individual of any type in $T_k$ is born.   Furthermore, define 
\begin{align*}
{q_i^{(k)}} = \mathds{P}[{{N_e}} < {\tau_k}|\varphi_0 = i],
\end{align*}
the conditional probability that the process eventually becomes extinct before the birth of any individual of a type in $T_k$, given that the initial individual has type $i$, and $\bs{q}^{(k)} = (q_1^{(k)}, q_2^{(k)}, \ldots ).$ 

\begin{lem} 
The sequence $\{\bs{q}^{(k)}\}_{k\geq0}$ is monotonically increasing and converges pointwise to the global extinction probability vector $\vc q$.
\end{lem}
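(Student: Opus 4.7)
The statement has two parts: monotonicity and pointwise convergence to $\vc q$. I would treat them in that order, both via direct monotone class / continuity-of-measure arguments applied to the events $\{N_e<\tau_k\}$.

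\textbf{Monotonicity.} Since $T_{k+1}=\{k+2,k+3,\ldots\}\subset T_k=\{k+1,k+2,\ldots\}$, any generation $n$ for which $\sum_{\ell\in T_{k+1}}Z_{n\ell}>0$ also satisfies $\sum_{\ell\in T_k}Z_{n\ell}>0$. Hence $\tau_k\leq \tau_{k+1}$ pointwise, so $\{N_e<\tau_k\}\subseteq\{N_e<\tau_{k+1}\}$, and consequently $q_i^{(k)}\leq q_i^{(k+1)}$ for every $i$. This is the easy part.

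\textbf{Convergence.} The plan is to identify $\bigcup_{k\geq 0}\{N_e<\tau_k\}$ with $\{N_e<\infty\}$ up to a null set, and then invoke continuity from below of $\mathds{P}[\,\cdot\,|\varphi_0=i]$. The inclusion $\{N_e<\tau_k\}\subseteq\{N_e<\infty\}$ is trivial for each $k$, since $\tau_k\leq \infty$. For the reverse inclusion, I would argue that on the event $\{N_e<\infty\}$ only finitely many individuals are ever produced: extinction in finitely many generations combined with the implicit assumption that each individual produces a finite total number of offspring almost surely (needed for the generating functions $P_i$ to be well-defined on $[0,1]^\infty$) implies that $\bigcup_{n\leq N_e}\{\ell:Z_{n\ell}>0\}$ is almost surely a finite subset of $\mathcal{S}$. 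Let $K$ denote its maximum; then no individual of a type in $T_k$ is ever born for any $k\geq K$, so $\tau_k=\infty>N_e$ for all such $k$. This places the sample point in $\{N_e<\tau_k\}$ for $k$ large enough, proving the reverse inclusion almost surely.

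Putting the two inclusions together,
\begin{align*}
\{N_e<\infty\}=\bigcup_{k\geq 0}\{N_e<\tau_k\}\quad\text{a.s.,}
\end{align*}
and since the union is increasing in $k$ by the monotonicity step, continuity from below gives
\begin{align*}
q_i=\mathds{P}[N_e<\infty\mid\varphi_0=i]=\lim_{k\to\infty}\mathds{P}[N_e<\tau_k\mid\varphi_0=i]=\lim_{k\to\infty}q_i^{(k)}
\end{align*}
for every $i\in\mathcal{S}$, which is the desired pointwise convergence.

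\textbf{Main obstacle.} The only subtle point is the claim that on $\{N_e<\infty\}$ only finitely many distinct types appear; this rests on the standing (but unstated) assumption that each individual has a finite total progeny almost surely. Once that is granted, an induction on generations up to $N_e$ shows that the cumulative set of realised types is finite a.s., and the rest of the argument is routine measure-theoretic bookkeeping.
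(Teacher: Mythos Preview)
Your proof is correct and follows essentially the same route as the paper: monotonicity from $T_{k+1}\subset T_k\Rightarrow\tau_k\leq\tau_{k+1}$, and convergence by continuity from below applied to the increasing events $\{N_e<\tau_k\}$. The paper compresses the convergence step into the single line $\lim_k\mathds{P}[N_e<\tau_k\mid\varphi_0=i]=\mathds{P}[N_e<\lim_k\tau_k\mid\varphi_0=i]=\mathds{P}[N_e<\infty\mid\varphi_0=i]$, tacitly using $\tau_k\nearrow\infty$; your argument that on $\{N_e<\infty\}$ only finitely many types ever appear (hence $\tau_k=\infty$ for all large $k$) is exactly the justification the paper leaves implicit, so your version is in fact more careful on the one point you flagged as the main obstacle.
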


\begin{proof}
Clearly, $T_k\supset T_{k+1}$ for all $k$, and  $\tau_k\leq\tau_{k+1}$, so that  $[{N_e} < {\tau_k}] \subseteq [{N_e} < {\tau_{k+1}}]$, and ${\bs{q}^{(k)}}\leq{\bs{q}^{(k+1)}}$.  Therefore, for any $i$,
\begin{align*} 
\lim_{k\rightarrow\infty}q_i^{(k)}& = \lim_{k\rightarrow\infty}\mathds{P}[{{N_e}}<{\tau_k}\,|\,\varphi_0 = i] \\
& = \mathds{P}[{{N_e}}<\lim_{k\rightarrow\infty} {\tau_k} \,|\,\varphi_0 = i]\\
& = \mathds{P}[{{N_e}} < \infty\,|\,\varphi_0 = i] \\
& = q_i, 
\end{align*}
which completes the proof. 
\end{proof}

By definition, ${q}^{(k)}_i=0$ for all $i\in T_k$. Consequently,
 \begin{align*}
\bs{q}^{(0)} & =(0, 0,\ldots), \\
\bs{q}^{(k)} & = (q^{(k)}_1, \ldots, q^{(k)}_k, 0, 0, \ldots) \quad \mbox{ for } k\geq1.
\end{align*}
Thus, at the $k$th iteration one only needs to compute the finite vector $\bs{w}^{(k)} = (q^{(k)}_1, \ldots, q^{(k)}_k)$, which we do as follows. Consider a branching process $\{\mathcal{W}_{n}^{(k)}\}_{n\in\mathds{N}}$ which evolves like $\{\mathcal Z_n\}$ under taboo of the types in $T_{k}$.  The taboo progeny distribution $f_{i\vc j}^{(k)}$ associated with types $i\in\{1, \ldots, k\}$ in $\{\mathcal{W}_{n}^{(k)}\}$ is defined as
\begin{align*} 
f^{(k)}_{i(j_1,\ldots,j_k)}=  
 p_{i(j_1,\ldots,j_k,0,0,\ldots)}.
\end{align*}
If the process is irreducible, then 
$
\sum_{\vc j\in\mathds{N}^{k}} f^{(k)}_{i(j_1,\ldots,j_k)} \leq 1,
$
for $1 \leq i \leq k$, with at least one strict inequality, and we need to add an absorbing state $\Delta$ to the state space $\mathds{N}^{k}$ of $\{\mathcal{W}_{n}^{(k)}\}$ to account for the missing probability mass. Obviously, absorption in $\Delta$ precludes extinction, and $\bs{w}^{(k)}$ is the vector of probability that $\{\mathcal{W}_{n}^{(k)}\}$ becomes extinct before being absorbed in $\Delta$, given the type of the initial particle.  In consequence, $\bs{w}^{(k)}$ is the minimal nonnegative solution of the finite system of equations 
\begin{align} \label{sys1}
 s_i=F^{(k)}_i(s_1,s_2,\ldots,s_k), \qquad \mbox{ for } 1\leq i\leq k,
\end{align} 
where $F_i^{(k)}(\vc s)=P_i(s_1,\ldots,s_k,0,0\ldots)$ is the probability generating function of the distribution $f^{(k)}_{i\vc j}$.

We may compute $\bs{w}^{(k)}$ by linear functional iteration, for instance, on the fixed-point equation \eqref{sys1}: one easily verifies that, for any $k\geq1$, the sequence $\{\bs{w}^{(k,n)}=({w}_1^{(k,n)},\ldots,{w}_k^{(k,n)})\}_{n\geq0}$ recursively defined as
\begin{align*}
w_i^{(k,n)} & = F^{(k)}_i(w_1^{(k,n-1)}, \ldots, w_k^{(k,n-1)}) \qquad \mbox{for $1\leq i\leq k$,} 
\end{align*}
for $n \geq 1$, with $\bs{w}^{(k,0)}=(0,0\ldots,0)$, satisfies
\begin{align*}
\bs{w}^{(k,n)} & = \mathds{P}[{{N_e}} < \tau_k \mbox{ and } N_e \leq n  \,|\; \varphi_0],
\end{align*}
and is, therefore, monotonically increasing to $\bs w^{(k)}$.  In practice, we would  terminate the functional iteration for a given $k$ when $|| \vc w^{(k,n+1)}-\vc w^{(k,n)}||$ becomes sufficiently small.

\subsection{Partial extinction probability}
	\label{subsec:localex}

Here, we associate to the branching process $\{ \calz_n\}$ a family of processes 
$\{\mathcal{Z}^{(k)}_n=(Z^{(k)}_{n1},Z^{(k)}_{n2},\ldots)\}_{n\in\mathds{N}}$, for $k \geq 0$,  obtained as follows: for a given $k$, we count neither the individuals of types in ${T_k}$, nor {\em any} of their descendants, whatever their types. It is as if all individuals of types in ${T_k}$ became {sterile}.
  Define $\tilde{\bs{q}}^{(k)}$ to be the global extinction probability vector of the process $\{\mathcal{Z}^{(k)}_n\}$.

\begin{lem} \label{qtidleseq}
The sequence of vectors $\{\tilde{\bs{q}}^{(k)}\}_{k \geq 0}$ is monotonically decreasing and converges pointwise to the partial extinction probability vector $\tilde{\bs{q}}$.\end{lem}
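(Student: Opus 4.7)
The plan is to separate monotonicity (a coupling argument) from the identification of the limit (combining a probabilistic sandwich with the fixed-point characterisation of $\tilde{\bs q}$).

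\emph{Monotonicity and the sandwich.} I would first couple all the $\mathcal Z^{(k)}$ with $\mathcal Z$ on one probability space by sterilising exactly the individuals whose type lies in $T_k$. Since $T_{k+1}\subset T_k$, any individual counted in $\mathcal Z^{(k)}_n$ is also counted in $\mathcal Z^{(k+1)}_n$, so $|\mathcal Z^{(k)}_n|\le|\mathcal Z^{(k+1)}_n|$ pointwise; hence global extinction of $\mathcal Z^{(k+1)}$ implies global extinction of $\mathcal Z^{(k)}$, and $\tilde q^{(k+1)}_i\le\tilde q^{(k)}_i$. The pointwise limit $\tilde{\bs q}^{\ast}:=\lim_k\tilde{\bs q}^{(k)}$ therefore exists. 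The same coupling gives $Z^{(k)}_{n\ell}\le Z_{n\ell}$ for $\ell\le k$, so partial extinction of $\mathcal Z$ forces $|\mathcal Z^{(k)}_n|\to 0$, whence $\tilde{\bs q}\le\tilde{\bs q}^{(k)}$ for every $k$. Consequently $\tilde{\bs q}\le\tilde{\bs q}^{\ast}$, and it remains to show the reverse inequality.

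\emph{Fixed-point identity.} Conditioning $\mathcal Z^{(k)}$ on the first generation, an initial individual of type $i\le k$ reproduces according to $p_{i\bs j}$; each type-$\ell$ offspring with $\ell\le k$ launches an independent copy of $\mathcal Z^{(k)}$, while offspring of types in $T_k$ are sterile and contribute the factor $1$ to the extinction probability. This yields
$$
\tilde q^{(k)}_i \;=\; P_i\bigl(\tilde q^{(k)}_1,\dots,\tilde q^{(k)}_k,1,1,\dots\bigr),\qquad 1\le i\le k.
$$
Fixing $i$ and letting $k\to\infty$, dominated convergence inside $P_i$ (the summand being dominated by $p_{i\bs j}$) gives $\tilde q^{\ast}_i=P_i(\tilde{\bs q}^{\ast})$, so $\tilde{\bs q}^{\ast}$ solves~(\ref{exteq}). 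To pin $\tilde{\bs q}^{\ast}$ down to $\tilde{\bs q}$, I would interpret $\tilde q^{\ast}_i$ probabilistically, via monotone convergence of the decreasing events, as the probability that \emph{every} truncated process $\mathcal Z^{(k)}$ becomes globally extinct, and then identify this intersection with the partial extinction event of $\mathcal Z$ almost surely: any individual that persists in $\mathcal Z$ must, for $k$ large enough, also be observed inside $\mathcal Z^{(k)}$.

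\emph{Main obstacle.} The least routine step is matching $\bigcap_{k\ge 0}\{\mathcal Z^{(k)}\text{ globally extinct}\}$ to the partial extinction event up to null sets. One inclusion is immediate from the coupling; for the reverse, one must rule out sample paths in which every truncated subtree dies out and yet some $Z_{n\ell}$ persists through lineages whose maximum type grows unboundedly with $n$. I anticipate closing this gap by a K\"onig-lemma compactness argument on the random family tree (using that each individual has finitely many offspring almost surely) combined with the finiteness of each generation, or alternatively by exploiting that $\tilde{\bs q}^{\ast}$ is a fixed point of $\bs P$ sandwiched between $\tilde{\bs q}$ and $\bs 1$ together with a minimality property of $\tilde{\bs q}$ available from the setup.
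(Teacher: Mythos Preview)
Your coupling argument for monotonicity and the inequality $\tilde{\bs q}\le\tilde{\bs q}^{(k)}$ is exactly what the paper does. The divergence is in how you identify the limit.

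The paper never touches the fixed-point equation here. It works purely at the event level: setting $B_\ell^{(k)}=[\lim_n Z^{(k)}_{n\ell}=0]$ and $A^{(k)}=\bigcap_\ell B_\ell^{(k)}$, it uses that $|\mathcal Z^{(k)}_n|$ has only finitely many nonzero summands to rewrite $\tilde{\bs q}^{(k)}=\mathds P[A^{(k)}\mid\varphi_0]$, then asserts the set convergence $A^{(k)}\searrow\bigcap_{\ell\ge 1}[\lim_n Z_{n\ell}=0]$ from $B_\ell^{(k+1)}\subseteq B_\ell^{(k)}$ and $\mathcal Z^{(k)}_n\nearrow\mathcal Z_n$, and concludes by continuity of probability along a decreasing sequence of events. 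What you call ``approach~(a)'' is precisely this route; the paper does not flag it as an obstacle and invokes no compactness argument.

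Your fixed-point detour is a genuine dead end. ``Approach~(b)'' fails because $\tilde{\bs q}$ enjoys \emph{no} minimality property among the solutions of~(\ref{exteq}): the minimal nonnegative solution is $\bs q$, not $\tilde{\bs q}$, and the paper stresses that~(\ref{exteq}) can have infinitely many solutions when $M$ is reducible. Even in the irreducible case the only solutions are $\bs q$ and $\bs 1$, so from $\tilde{\bs q}\le\tilde{\bs q}^{\ast}\le\bs 1$ together with $\bs P(\tilde{\bs q}^{\ast})=\tilde{\bs q}^{\ast}$ you cannot exclude $\tilde{\bs q}^{\ast}=\bs 1>\tilde{\bs q}$. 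The K\"onig-lemma suggestion does not close the gap either: an infinite ray in a locally finite family tree may well have unbounded types along it, so its mere existence does not force any fixed truncation $\mathcal Z^{(k)}$ to survive; you would need an infinite ray with \emph{bounded} types, and K\"onig's lemma does not supply one. Drop the fixed-point route and argue directly with the decreasing events $A^{(k)}$ as the paper does.
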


\begin{proof}
Obviously,
 \begin{align*}\mathcal{Z}^{(k)}_n & =(Z^{(k)}_{n1},Z^{(k)}_{n2},\ldots,Z^{(k)}_{nk},0,0,0,\ldots)\\
            & \leq (Z^{(k+1)}_{n1},Z^{(k+1)}_{n2},\ldots,Z^{(k+1)}_{nk},Z^{(k+1)}_{n(k+1)},0,0,\ldots) \quad \textrm{a.s.}\\
            & = \mathcal{Z}^{(k+1)}_n,
 \end{align*} 
so that, for $n$ fixed and $k \rightarrow \infty$, $\mathcal{Z}^{(k)}_n$ monotonically converges to  $\mathcal{Z}_n$. Furthermore, 
\begin{align*}
\lim_{n\rightarrow\infty} |\mathcal{Z}^{(k)}_n|\leq \lim_{n\rightarrow \infty} |\mathcal{Z}^{(k+1)}_n|,
\end{align*} 
so that  $\tilde{\bs{q}}^{(k)} \geq \tilde{\bs{q}}^{(k+1)}$, for $k\geq0$.

Let $B_{\ell}^{(k)}=[\lim_{n\rightarrow\infty}  Z^{(k)}_{n\ell}=0]$ be the event that type $\ell$ of $\{\mathcal{Z}_{n}^{(k)}\}$ eventually becomes extinct, and let $A^{(k)}=\cap_{\ell\geq 1} B_{\ell}^{(k)}$ be the event that all types of $\{\mathcal{Z}_{n}^{(k)}\}$ eventually become extinct. We have
\begin{align*} \tilde{\bs{q}}^{(k)} &= \mathds{P}[\lim_{n\rightarrow\infty} |\mathcal{Z}^{(k)}_n|=0\,|\,\varphi_0] = \mathds{P}[A^{(k)}\,|\,\varphi_0],\end{align*} 
since $|\mathcal{Z}^{(k)}_n|=\sum_{\ell} Z^{(k)}_{n,\ell}$ contains only finitely many nonzero terms. 
Furthermore, $B_{\ell}^{(k+1)}\subseteq B_{\ell}^{(k)}$, so that $ A^{(k+1)}\subseteq A^{(k)}$, and 
\begin{align*} 
A^{(k)} \searrow A^{(\infty)}=\bigcap_{\ell\geq 1}[\lim_{n\rightarrow\infty}  Z_{n\ell}=0].
\end{align*}
Therefore,
\begin{align*} 
\lim_{k\rightarrow\infty}\tilde{\bs{q}}^{(k)}  = \mathds{P}[A_{\infty}\,|\,\varphi_0] = \mathds{P}[\forall\ell: \lim_{n\rightarrow\infty}  Z_{n\ell}=0\,|\,\varphi_0] = \tilde{\bs{q}},\end{align*}
which completes the proof.
\end{proof}

By definition of $\{\calz_n^{(k)}\}$, $\tilde{q}^{(k)}_i=1$ for all $i\in T_k$, and so
\begin{align*} 
\tilde{\bs{q}}^{(0)} & =  (1,1,\ldots), \\
\tilde{\bs{q}}^{(k)} & = (\tilde{q}^{(k)}_1, \ldots, \tilde{q}^{(k)}_k, 1, 1, \ldots).
\end{align*} 
To compute the finite vector $\widetilde{\bs{w}}^{(k)} = (\tilde{q}^{(k)}_1, \ldots, \tilde{q}^{(k)}_k)$, we may interpret $\{\calz_n^{(k)}\}$ as a Galton-Watson process with finitely many types and progeny distribution 
 $\tilde f_{i\vc j}^{(k)}$  defined as follows:
\begin{align*} 
\tilde f^{(k)}_{i(j_1,\ldots,j_k)}=  \sum_{j_{k+1}, j_{k+2}, \ldots \geq 0}
 p_{i(j_1,\ldots,j_k,j_{k+1}, j_{k+2},\ldots)}.
\end{align*}
and  $\widetilde{\bs{w}}^{(k)} $ is the minimal nonnegative solution of the finite system of equations 
\begin{align*} 
 s_i=\widetilde{F}^{(k)}_i(s_1,s_2,\ldots,s_k), \qquad \mbox{ for } 1\leq i\leq k,
\end{align*} 
where $\widetilde{F}^{(k)}_i(\bs s)=P_i(s_1,\ldots,s_k,1,1,\ldots)$ is the probability generating function of the distribution $\tilde f_{i\vc j}^{(k)}$.  
That equation may be solved by functional iteration, as explained at the end of Subsection~\ref{sec:global}.

\section{Extinction criteria} 
	\label{sec:superc} 

When the number of types is finite and $M$ is irreducible, it is well-known that
\begin{itemize}
\item $\vc q < \vc 1$ if $\spc(M) > 1$,
\item $\vc q = \vc 1$ if $\spc(M) \leq 1$.
\end{itemize}
If $M$ is reducible, then
\begin{itemize}
\item $\vc{q} \lneqq\vc 1$ if and only if  $\spc(M) > 1$,
\end{itemize}
where we write $\vc{v} \lneqq\vc 1$ to indicate that $v_i \leq 1$ for all $i$, with at least one strict inequality.  Indeed, if $M$ is reducible, there may exist some type (but not all) from which partial extinction is almost sure even if  $\spc(M)> 1$  (Hautphenne~\cite{hautphenne12}). 

We  expect that, in the infinite countable case, some analogue of  $\spc(M)$ also plays a role in determining if extinction occurs almost surely or not. This is the case for partial extinction,  but not necessarily for global extinction.

\subsection{Partial extinction --- $M$ irreducible}
	\label{subset:partialcriteria}

We denote by $\widetilde{M}^{(k)}$ the mean progeny matrix of the process $\{\calz_{n}^{(k)}\}$ defined in Section~\ref{subsec:localex}, and by $M^{(k)}$ the $k \times k$ north-west truncations of the infinite matrix $M$. As we do not count individuals with types in $T_k$,  $\widetilde{M}^{(k)}$ is given by
\begin{align*}
\widetilde{M}^{(k)}=\left[\begin{array}{c|c} M^{(k)} & 0 \\\hline  0 & 0\end{array}\right],
\end{align*}
and it is clear that $\tilde{\vc{q}}^{(k)}=
(\tilde{q}_1^{(k)}, \ldots, \tilde{q}_k^{(k)},1,1,\ldots) = \vc 1$ 
if $\spc(M^{(k)}) \leq 1$, otherwise $\tilde{\vc q}^{(k)} \lneqq\vc 1 $, 
with $(\tilde{q}_1^{(k)},\ldots,\tilde{q}_k^{(k)}) < \vc 1 $ if $M^{(k)}$ is irreducible.

We assume in this subsection that  $M$ is irreducible.  The \textit{convergence norm} of $M$ is defined as follows. Let $R$ be the convergence radius of the power series $\sum_{k \geq 0} r^k(M^k)_{ij}$, which does not depend on $i$ and $j$.  The {convergence norm} ${\nu}$ of ${M}$ is 
\begin{align*} 
{\nu} & = {R}^{-1} = {\lim_{k \rightarrow \infty} \{(M^k)_{ij}\}^{1/k}};
\end{align*}
it is also the smallest value such that there exists a nonnegative vector $\vc x$ satisfying $ \vc x M\leq \nu\vc x$ (Seneta~\cite[Definition~6.3]{senetabook}). Note that the convergence norm of a finite matrix is equal to its spectral radius.

If we assume that all but at most a finite number of truncations $M^{(k)}$ are irreducible, then by Seneta (\cite[Theorem 6.8]{senetabook}) the sequence $\{\spc(M^{(k)})\}$  is non-decreasing and $\lim_{k \rightarrow \infty} \spc(M^{(k)}) = \nu$, and one immediately shows the following property.

\begin{prop}\label{locextcrit}  Assume that $M$ is irreducible.  The partial extinction probability vector ${\tilde{\bs{q}}}$ is such that $ {\tilde{\bs{q}}} <\vc 1$ if and only if $\nu>1$.\end{prop}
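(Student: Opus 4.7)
The plan is to combine two ingredients already laid out in the paper. From Lemma~\ref{qtidleseq} the sequence $\{\tilde{\bs{q}}^{(k)}\}$ decreases pointwise to $\tilde{\bs{q}}$. The block form of $\widetilde{M}^{(k)}$ displayed just above the proposition yields $\spc(\widetilde{M}^{(k)})=\spc(M^{(k)})$ (the trailing zero block contributes only the eigenvalue zero), so the finite-type dichotomy recalled immediately after that display applies: $\tilde{\bs{q}}^{(k)}=\bs{1}$ whenever $\spc(M^{(k)})\leq 1$, while $\tilde{q}^{(k)}_i<1$ for every $1\leq i\leq k$ whenever $\spc(M^{(k)})>1$ and $M^{(k)}$ is irreducible. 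The last piece of machinery is Seneta~\cite[Thm.~6.8]{senetabook}, cited just before the proposition: under the standing assumption that all but finitely many $M^{(k)}$ are irreducible, $\{\spc(M^{(k)})\}$ is nondecreasing and $\spc(M^{(k)})\nearrow\nu$.

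For the implication $\nu>1\Rightarrow\tilde{\bs{q}}<\bs{1}$, I would fix an index $i$ and choose $k$ large enough that both $k\geq i$ and $\spc(M^{(k)})>1$; such a $k$ exists because $\spc(M^{(k)})\to\nu>1$. Then $\tilde{q}_i^{(k)}<1$ by the dichotomy, and the bound $\tilde{q}_i\leq\tilde{q}_i^{(k)}$ supplied by Lemma~\ref{qtidleseq} gives $\tilde{q}_i<1$. Since $i$ is arbitrary, $\tilde{\bs{q}}<\bs{1}$.

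For the converse I would argue by contrapositive: if $\nu\leq 1$, then $\spc(M^{(k)})\leq\nu\leq 1$ for every $k$, so $\tilde{\bs{q}}^{(k)}=\bs{1}$ for every $k$, and the monotone convergence in Lemma~\ref{qtidleseq} forces $\tilde{\bs{q}}=\bs{1}$. The proof is therefore essentially a two-line consequence of the preceding apparatus; the only step worth explicit verification is the block-diagonal identity $\spc(\widetilde{M}^{(k)})=\spc(M^{(k)})$, and even that is routine. There is no genuine obstacle, which is why the statement is labelled a proposition rather than a theorem.
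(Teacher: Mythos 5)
Your proposal is correct and follows essentially the same route as the paper: monotone decrease of $\tilde{\bs q}^{(k)}$ to $\tilde{\bs q}$ from Lemma~\ref{qtidleseq}, the finite-type criterion applied to the truncations $M^{(k)}$, and Seneta's result that $\spc(M^{(k)})\nearrow\nu$. Your version is in fact marginally more careful on the forward direction (fixing an arbitrary index $i$ and taking $k\geq i$ with $M^{(k)}$ irreducible, rather than invoking a single $k$), but this is a presentational refinement, not a different argument.
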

\begin{proof}
If $\nu > 1$, as  $\spc(M^{(k)})\nearrow \nu$, there exists some $k$ such that $\spc(M^{(k)})>1$ and such that $M^{(k)}$ is irreducible. Thus, $(\tilde{q}_1^{(k)},\ldots,\tilde{q}_k^{(k)})< \vc 1$ and, since $\tilde{\vc q}^{(k)}\searrow {\tilde{\bs{q}}}$ by Proposition \ref{qtidleseq},  $\tilde{\bs{q}} <\vc 1$ in the limit. 

If $\nu\leq 1$, then for all $k$, $\spc(M^{(k)})\leq1$ and $ {\tilde{\bs{q}}}^{(k)} =\vc 1$, which implies that ${\tilde{\bs{q}}} =\vc 1$.
\end{proof}

This is also observed in Gantert \textit{et al.} \cite{gantert10} and Müller \cite{muller08}, who use  different arguments.

\subsection{Partial extinction --- $M$ reducible}
   \label{s:mreducible}

Let us assume now that the matrix $M$ is reducible.
The sequence $\{sp(M^{(k)})\}$ is still non-decreasing, but its limit might not be the convergence norm of~$M$.  Let $\bar{\nu}\in[0,\infty]$ denote the limit. The proof of the  proposition below is very  similar to that of Proposition~\ref{locextcrit} and is omitted.
\begin{prop}\label{locextcrit2}  The partial extinction probability vector ${\tilde{\bs{q}}}$ is such that $ {\tilde{\bs{q}}} =\vc 1$ if and only if $\bar{\nu}\leq1$, otherwise $\tilde{\bs{q}} \lneqq\vc 1$.
\end{prop}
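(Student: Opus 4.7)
The plan is to mirror the proof of Proposition~\ref{locextcrit} essentially line for line, with two adjustments: substitute $\bar\nu$ for the convergence norm $\nu$, and weaken the conclusion from $\tilde{\bs q}<\vc 1$ to $\tilde{\bs q}\lneqq\vc 1$ to match the reducible setting. All the real machinery has already been set up, so the argument is a short dichotomy on the value of $\bar\nu$.

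First I would dispatch the easy direction. Assuming $\bar\nu\leq 1$, monotonicity of $\{\spc(M^{(k)})\}$ forces $\spc(M^{(k)})\leq 1$ for every $k$, and the block-diagonal form of $\widetilde M^{(k)}$ together with the finite-type extinction criterion recorded immediately before the proposition gives $\tilde{\bs q}^{(k)}=\vc 1$ for every $k$. Lemma~\ref{qtidleseq} then delivers $\tilde{\bs q}=\lim_k\tilde{\bs q}^{(k)}=\vc 1$.

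For the converse, assume $\bar\nu>1$. Because $\spc(M^{(k)})\nearrow\bar\nu$, I can fix some $k_0$ with $\spc(M^{(k_0)})>1$. The finite-type dichotomy noted just before the proposition yields $\tilde{\bs q}^{(k_0)}\lneqq\vc 1$, so at least one coordinate $i_0\in\{1,\ldots,k_0\}$ satisfies $\tilde q_{i_0}^{(k_0)}<1$. The monotone decrease $\tilde{\bs q}^{(k)}\searrow\tilde{\bs q}$ from Lemma~\ref{qtidleseq} preserves that strict inequality in the limit, giving $\tilde q_{i_0}\leq\tilde q_{i_0}^{(k_0)}<1$, and hence $\tilde{\bs q}\lneqq\vc 1$.

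The one conceptual point worth flagging, and the reason the authors state $\lneqq$ rather than $<$, is that in the reducible case some initial types can still satisfy $\tilde q_i=1$ even when $\bar\nu>1$: without an irreducibility hypothesis on $M^{(k_0)}$ there is no mechanism for propagating the strict inequality at $i_0$ across to every coordinate. No additional obstacle arises beyond this careful weakening, since the monotone convergence of $\{\tilde{\bs q}^{(k)}\}$, the spectral criterion for finite-type branching, and the block form of $\widetilde M^{(k)}$ are all already in place; this is presumably why the authors felt comfortable omitting the written proof.
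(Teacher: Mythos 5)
Your proposal is correct and is essentially the proof the authors had in mind: the paper omits the argument precisely because it is, as you carried out, the proof of Proposition~\ref{locextcrit} with $\spc(M^{(k)})\nearrow\bar\nu$ in place of $\spc(M^{(k)})\nearrow\nu$ and the finite-type reducible criterion ($\tilde{\bs q}^{(k)}\lneqq\vc 1$ iff $\spc(M^{(k)})>1$) replacing the irreducible one. Your closing remark about why the conclusion weakens to $\lneqq$ is exactly the point the authors address afterwards via Proposition~\ref{locextcrit3}.
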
  

In other words, there exists at least one type $i$ such that $\tilde{q}_i<1$ if and only if $\bar{\nu}>1$. The next question is, for which $i$ does the inequality $\tilde{q}_i<1$ hold? We give below a necessary and sufficient condition for $\tilde{q}_i$ to be strictly less than one.

We write $i\rightarrow j$ when type $i$ has a positive probability to generate an individual of type $j$ in a subsequent generation, that is, if there exists $n\geq1$ such that $(M^n)_{ij}>0$.  We define equivalent classes $C_1,C_2,\ldots$ such that, for each $k$, if $i \in C_k$, then $j \in C_k$ if and only if $i\rightarrow j$ and $j\rightarrow i$, for all $j$. This induces a partition of the set of types $\mathcal{S}$ and we write that $C_k\rightarrow C_\ell$  when there exist $i\in C_k$ and $j\in C_\ell$ such that $i\rightarrow j$. 
We denote by  $M_{k}$ the irreducible mean progeny matrix  restricted to types in $C_k$, that is, $M_{k}=(M_{ij})_{i,j\in C_k}$, and by  $\nu_k$  the convergence norm of $M_{k}$.
 
\begin{prop}\label{locextcrit3} If $i$ is a type in $C_k$, then the partial extinction probability $\tilde{q}_i$ is strictly less than 1 if and only if $\nu_k > 1$ or  there exists a class $C_\ell$ such that $C_k\rightarrow C_\ell$ and $\nu_\ell>1$.\end{prop}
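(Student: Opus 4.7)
The plan is to reduce both directions to the previously established extinction criteria in Propositions \ref{locextcrit} and \ref{locextcrit2}, after restricting attention to the sub-branching-process generated from an ancestor of type $i$. Write $S_i=C_k\cup\bigcup_{\ell:\,C_k\to C_\ell} C_\ell$, which is precisely the set of types that can ever appear in the lineage of an initial individual of type $i$. Since no type in $S_i$ can produce offspring outside $S_i$, the restriction of $\{\calz_n\}$ to $S_i$ is itself a well-defined Galton--Watson process, and its partial extinction probability starting from $i$ coincides with $\tilde q_i$.

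For sufficiency I would consider two cases. First, if $\nu_k>1$, I would make \emph{all} types not in $C_k$ sterile; starting from $i\in C_k$, this produces a sub-branching-process with irreducible mean matrix $M_k$, to which Proposition \ref{locextcrit} applies and yields a partial extinction probability strictly less than one. A simple coupling argument (since the sub-process is dominated componentwise by the original process on $C_k$) then gives $\tilde q_i<1$. Second, if $\nu_\ell>1$ for some $C_\ell$ with $C_k\to C_\ell$, then by the definition of the reachability relation and the fact that $C_k$ is an equivalence class, there is some $j\in C_\ell$ and some generation $m$ with $\pp[\text{a type-}j\text{ descendant of }i\text{ exists at generation }m]=p>0$. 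Conditioning on this event and using the independence of subtrees together with the first case applied to $j\in C_\ell$ yields $\tilde q_i\le 1-p(1-\tilde q_j)<1$.

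For necessity I would argue the contrapositive: assume $\nu_k\le 1$ and $\nu_\ell\le 1$ for every $C_\ell$ with $C_k\to C_\ell$, and show $\tilde q_i=1$. The key lemma is that any finite principal submatrix $A$ of $M|_{S_i}$ satisfies $\spc(A)\le 1$. Indeed, since the quotient of $S_i$ by the communication relation is a DAG, one can reorder the indices of $A$ so that $A$ becomes block upper-triangular with diagonal blocks $A\cap C_\ell$; the spectral radius of a block triangular nonnegative matrix is the maximum of the diagonal blocks' spectral radii, and each diagonal block is a finite principal submatrix of the irreducible matrix $M_\ell$, hence has spectral radius at most $\nu_\ell\le 1$ (by Seneta's Theorem 6.8, which is already invoked in the paper). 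Applying this with $A$ equal to any north-west truncation of $M|_{S_i}$ (for some enumeration of $S_i$) yields $\bar\nu\le 1$ for the restricted process, and Proposition \ref{locextcrit2} then gives $\tilde q_i=1$.

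The main obstacle I anticipate is the lemma bounding $\spc(A)$ for finite principal submatrices of $M|_{S_i}$: it requires explicitly invoking the block-triangular structure coming from the DAG of classes, and carefully combining the monotonicity of the Perron value under principal submatrices with the fact that convergence norms of irreducible matrices are suprema over finite truncations. Everything else, including the coupling in Case 1 and the subtree-conditioning in Case 2 of sufficiency, is a straightforward probabilistic argument once the appropriate sub-process has been isolated.
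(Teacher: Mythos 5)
Your proof is correct, and its sufficiency half is essentially the paper's: both treat the cases $\nu_k>1$ and ``$C_k\rightarrow C_\ell$ with $\nu_\ell>1$'' by applying Proposition~\ref{locextcrit} to the irreducible class in question and then using reachability together with independence of subtrees to pull the conclusion back to type $i$ (your explicit bound $\tilde q_i\le 1-p(1-\tilde q_j)$ just quantifies what the paper asserts). Where you genuinely diverge is the necessity direction. The paper dispatches it in two sentences: if every class reachable from $C_k$ has convergence norm at most one, then all descendants of $i$ generate processes that partially become extinct almost surely, hence $\tilde q_i=1$; this leaves implicit how almost-sure partial extinction of each reachable class \emph{in isolation} combines into partial extinction of the entire progeny of $i$, given that types in a downstream class may be re-seeded repeatedly from upstream classes. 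You instead isolate the closed set $S_i$ of reachable types, prove that every finite principal submatrix of $M$ restricted to $S_i$ has spectral radius at most one (topological ordering of the finitely many classes met, block upper-triangularity, monotonicity of the Perron root under principal submatrices, and Seneta's truncation theorem), conclude that $\bar\nu\le 1$ for the restricted process, and invoke Proposition~\ref{locextcrit2}. This is a cleaner and more self-contained reduction: the delicate combination-across-classes step is funnelled through the already-stated Proposition~\ref{locextcrit2} (equivalently Lemma~\ref{qtidleseq}) rather than asserted probabilistically, at the cost of the small linear-algebraic lemma you rightly identify as the technical crux. One point worth recording to complete your argument: since the sequence of truncation spectral radii is nondecreasing, $\bar\nu$ equals the supremum of $\spc(A)$ over \emph{all} finite principal submatrices $A$, so it does not depend on the enumeration chosen for $S_i$, which is what licenses applying your lemma to an arbitrary ordering of the restricted type set.
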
 

\begin{proof}
Let $i\in C_k$ and assume that $\nu_k>1$. Then, by Proposition~\ref{locextcrit}, the probability that every type in $C_k$ eventually becomes extinct, given that the initial type is in $C_k$, is strictly less than one, hence $\tilde{q}_i<1$.

Now assume that $i\in C_k$, and that there exists a class $C_\ell$ such that $C_k\rightarrow C_\ell$ and $\nu_\ell>1$. Then, $i\rightarrow j$ for all $j\in C_\ell$, that is, there is a positive probability that type $i$ has type $j\in C_\ell$ among its descendants; moreover, since $\nu_\ell>1$, starting from any $j\in C_\ell$, the probability that every type in $C_\ell$ eventually becomes extinct is strictly less than one by Proposition~\ref{locextcrit}. We thus obtain $\tilde{q}_i<1$. 

If $\nu_k\leq 1$ and there is no class $C_\ell$ such that $C_k\rightarrow C_\ell$ and $\nu_\ell>1$, then all classes $C_\ell$ such that $C_k\rightarrow C_\ell$ satisfy $\nu_\ell\leq1$. In other words, by Proposition~\ref{locextcrit}, all the descendants of type $i$ will generate a process which partially becomes extinct with probability 1. So partial extinction is almost sure if the process is initiated by type $i$, and we have $\tilde{q}_i=1$.
\end{proof}

\subsection{Global extinction --- $M$ irreducible}
	\label{subsec:globalcriteria}

We assume again that $M$ is irreducible.  By  Lemma \ref{lem1} and Proposition \ref{locextcrit}, we know that if  $\nu >1$, then ${\vc q}= {\tilde{\bs{q}}}  <\vc1$, and
 if $\nu \leq 1$, then
$\vc q \leq {\tilde{\bs{q}}} =\vc1$.  One question which remains is to determine additional conditions that guarantee $\vc q = {\tilde{\bs{q}}} =\vc1$.  

The most precise answers are conditioned on the dichotomy property, which states that with probability 1 the population either becomes extinct or drifts to infinity (Harris~\cite{harris63}). In the finite case, this follows under very mild conditions but it is more problematic if the number of types is infinite. In particular,
%
%
Tetzlaff~\cite[Condition~2.1 and Proof of~Proposition~2.2]{tetzlaff03} gives the following sufficient condition for the dichotomy property to hold: it suffices that for all $k \geq 1$, there exists an index $m_k$ and a real number $d_k > 0$ such that 
\begin{align} \label{dich} 
\inf_{i} \mathds{P}[|\mathcal{Z}_{m_k}| = 0 \; | \;\varphi_0=i,\,1 \leq |\mathcal{Z}_1| \leq k] \geq d_k.
\end{align} 
%
This indicates that there is a positive, and bounded away from zero, probability for the population to become extinct.  The next property is proved in~\cite{tetzlaff03}.
\begin{prop} \label{prop:Tetzlaff} 
Assume that the dichotomy property holds.  If the limit $\liminf_{n\rightarrow\infty} (M^n\,\vc1)$ is finite, then ${\vc q}=\vc 1$.
\end{prop}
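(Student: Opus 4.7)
The plan is to combine Fatou's lemma (to control $\liminf_n |\mathcal Z_n|$ in expectation) with the dichotomy property (to rule out the explosion branch).

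First I would fix an initial type $i$ and observe that, by the usual branching identity $\mathds{E}[|\mathcal Z_n|\mid \varphi_0=i]=(M^n\vc 1)_i$. The hypothesis says this quantity has a finite $\liminf$ in $n$ for every $i$, so there is a subsequence $n_k=n_k(i)$ along which $\mathds{E}[|\mathcal Z_{n_k}|\mid\varphi_0=i]$ stays bounded.

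Next I would apply Fatou's lemma to the nonnegative integer-valued sequence $|\mathcal Z_n|$:
\begin{align*}
\mathds{E}\bigl[\liminf_{n\to\infty} |\mathcal Z_n|\;\bigm|\;\varphi_0=i\bigr]
\;\leq\; \liminf_{n\to\infty}\mathds{E}\bigl[|\mathcal Z_n|\;\bigm|\;\varphi_0=i\bigr]
\;=\;\liminf_{n\to\infty}(M^n\vc 1)_i\;<\;\infty.
\end{align*}
Consequently, $\liminf_{n\to\infty}|\mathcal Z_n|<\infty$ almost surely, given any initial type $i$.

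Now I would invoke the dichotomy property: conditional on $\varphi_0=i$, with probability one either $|\mathcal Z_n|\to 0$ (which, since $|\mathcal Z_n|$ is integer-valued, means extinction in finite time) or $|\mathcal Z_n|\to\infty$. On the explosion event, $\liminf_n|\mathcal Z_n|=\infty$, which is a null set by the previous step. Hence the extinction event has full probability, i.e.\ $q_i=1$. Since $i$ was arbitrary, $\vc q=\vc 1$.

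The only subtle point is making sure the dichotomy is being applied correctly: a priori $\liminf|\mathcal Z_n|<\infty$ allows infinitely many upward excursions, but the dichotomy rules this out by saying the limit itself exists in $\{0,\infty\}$ almost surely, so finiteness of the $\liminf$ forces the limit to be $0$. The Fatou step is routine (the random variables are nonnegative), and the hypothesis on $\liminf(M^n\vc 1)$ is exactly what is needed to feed into it.
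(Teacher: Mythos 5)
Your argument is correct: Fatou's lemma gives $\mathds{E}[\liminf_n|\mathcal Z_n|\,|\,\varphi_0=i]\leq\liminf_n(M^n\vc 1)_i<\infty$, so $\liminf_n|\mathcal Z_n|<\infty$ almost surely, and the dichotomy property then forces $|\mathcal Z_n|\to 0$ almost surely. Note that the paper does not prove this proposition itself --- it simply cites Tetzlaff --- and your Fatou-plus-dichotomy argument is exactly the standard route taken there, with the only cosmetic remark being that your extraction of a bounded subsequence $n_k(i)$ is redundant, since Fatou's lemma already works directly with the $\liminf$.
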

A direct consequence, which brings the convergence norm back into the picture, is the following.
\begin{prop}\label{NC}
Assume that the dichotomy property holds. If there exist $\lambda\leq 1$ and $\vc x>\vc 0$ such that $\vc x\vc1<\infty$ and $\vc x M\leq\lambda \vc x$, then ${\vc q}=\vc 1$.
\end{prop}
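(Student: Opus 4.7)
The plan is to reduce the statement to Proposition \ref{prop:Tetzlaff} by showing that the hypotheses $\lambda \leq 1$, $\vc x > \vc 0$, $\vc x \vc 1 < \infty$, and $\vc x M \leq \lambda \vc x$ force $M^n \vc 1$ to remain finite componentwise for all $n$, so in particular $\liminf_{n \to \infty}(M^n \vc 1)$ is finite. Once this is established, the dichotomy hypothesis together with Proposition \ref{prop:Tetzlaff} immediately gives $\vc q = \vc 1$.

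First I would iterate the inequality $\vc x M \leq \lambda \vc x$. Since $M$ has nonnegative entries and $\vc x > \vc 0$, this can be iterated to obtain $\vc x M^n \leq \lambda^n \vc x$ for every $n \geq 1$, and because $\lambda \leq 1$, also $\vc x M^n \leq \vc x$. Reading this componentwise, for each $j$ we have
\begin{equation*}
\sum_{i} x_i (M^n)_{ij} \leq x_j.
\end{equation*}

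Next, since every $x_i$ is strictly positive, we may drop all terms but one on the left to get the componentwise bound $(M^n)_{ij} \leq x_j / x_i$ for all $i,j$ and $n \geq 1$. Summing over $j$ and using the assumption $\vc x \vc 1 = \sum_j x_j < \infty$ yields
\begin{equation*}
(M^n \vc 1)_i = \sum_{j} (M^n)_{ij} \leq \frac{1}{x_i} \sum_{j} x_j = \frac{\vc x \vc 1}{x_i} < \infty,
\end{equation*}
uniformly in $n$. Hence $\liminf_{n \to \infty}(M^n \vc 1)_i$ is finite for every $i$, and Proposition \ref{prop:Tetzlaff} applies to conclude $\vc q = \vc 1$.

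There is no real obstacle here; the only subtlety is to notice that the summability condition $\vc x \vc 1 < \infty$ is used precisely to turn a pointwise bound on $(M^n)_{ij}$ into a finite row-sum bound on $M^n \vc 1$. Without it, one would only recover $\nu \leq 1$ and hence $\tilde{\vc q} = \vc 1$ via Proposition \ref{locextcrit}, which is weaker than the desired global statement $\vc q = \vc 1$.
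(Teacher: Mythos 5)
Your proof is correct and follows essentially the same route as the paper: iterate $\vc x M\leq\lambda\vc x$ to get $\vc x M^n\vc 1\leq\lambda^n\vc x\vc 1<\infty$, deduce componentwise finiteness of $M^n\vc 1$ from $\vc x>\vc 0$, and invoke Proposition~\ref{prop:Tetzlaff}. The only (harmless) difference is in the last extraction step: the paper applies Fatou's Lemma to pass the $\liminf$ inside the product $\vc x\,(M^n\vc 1)$, whereas you derive the explicit uniform bound $(M^n\vc 1)_i\leq \vc x\vc 1/x_i$ directly, which is if anything slightly more elementary.
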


\begin{proof}
Under the assumptions of the proposition, $\vc x M^n\vc1 \leq \lambda^n \vc x\vc1$, which implies that $\lim_{n\rightarrow\infty}\vc x M^n\vc1 < \infty$. Applying Fatou's Lemma, we obtain 
$
\vc x \lim_{n\rightarrow\infty}(M^n\vc1)< \infty,
$
which leads to $\lim_{n\rightarrow\infty}(M^n\vc1)< \infty$ since $\vc x > \vc 0$. Thus, by Proposition~\ref{prop:Tetzlaff} the result follows.
\end{proof}

If such a $\lambda$ exists, it is at least equal to $\nu$, and we remember that $\nu \leq 1$ is a necessary condition for $\vc q = \bs{1}$.  The difference between $\lambda$ and $\nu$, and the additional constraint imposed by this proposition, is that the measure associated to $\lambda$ must be convergent, which is not necessarily the case with the measure associated with $\nu$.

\subsection{Growth rate and extinction}
	\label{sec:initial}

When the number of types is finite and the process is irreducible, the expected total population size increases, or decreases, asymptotically geometrically: independently of the initial type,  $\mathbb{E}[|\mathcal{Z}_n|]\sim \rho^n$ where $\rho$ is the spectral radius of $M$.  This is no longer the case when the number of types is infinite, and the evolution of $\mathbb{E}[|\mathcal{Z}_n|]$ may depend on the distribution of $\varphi_0$.  Actually, it is possible for a process to become globally extinct almost surely while the expected population size increases without bounds.  This  we show below,  and we give one example in the next section.

Assume that there exists a probability measure $\vc\alpha_1$ such that $\vc\alpha_1 M \leq \lambda_1 \vc\alpha_1$ with $\lambda_1 < 1$, and a probability measure $\vc\alpha_2$ such that 
$\vc\alpha_2 M \geq \lambda_2 \vc\alpha_2$ with $\lambda_2 > 1$.   If, in addition, the dichotomy property holds, then $\vc q = \vc 1$ by Proposition~\ref{NC}.

If $\varphi_0$ has distribution $\vc\alpha_1$, then 
\begin{align*}
\mathbb{E}[|\mathcal{Z}_n|] = \vc \alpha_1 \,M^n\,\vc 1\leq \lambda_1^n
\end{align*}
so that  $\lim_{n \rightarrow \infty}\mathbb{E}[|\mathcal{Z}_n|] = 0$.
In the contrary, if $\varphi_0$ has the distribution $\vc\alpha_2$, then by a similar argument $\lim_{n \rightarrow \infty}\mathbb{E}[|\mathcal{Z}_n|] = \infty$ and the extinction probability is equal to $\vc\alpha_2 \vc q = 1$.

\section{Illustration}
      \label{sec:rand}

We illustrate the results of the previous sections with two examples, one for which $M$ is tridiagonal (and the process is irreducible) and one for which $M$ is super-diagonal (and the process is reducible).  

\subsection{Irreducible tridiagonal case}

This example corresponds to a homogeneous branching random walk on positive integers with a reflecting wall at $z=1$.  The mean progeny matrix~is
\begin{align} \label{trid}
M& = \left[\begin{array}{cccccc} 
b & \;\; c & \\
a & \;\; b & \;c \\ 
   & \;\; a & \;b & c \\
    &   & \ddots & \ddots & \ddots 
\end{array}\right], 
\end{align} 
where $a$ and $c$ are strictly positive, and $b$ is nonnegative.
\begin{prop} \label{propertyM} Assume that $M$ is as given in \eqref{trid}. Then, its convergence norm is ${\nu} = b + 2\sqrt{ac}$, and there exists $\vc x > \vc 0$ such that $\vc x M=\lambda \vc x$ for all $\lambda \geq\nu$. In addition, $\vc x\vc1<\infty$ if and only if $\lambda\in[\nu,a+b+c)$ and $a>c.$ 

The strictly positive and convergent invariant measure $\vc x$ is given by
\begin{align}
x_k & = \eta \,k\,({\sqrt{ac}}/{a})^k  && \mbox{ if } \lambda=\nu, \label{eqn:xkeq} \\
       & = \eta \,\{[({\lambda - b +\sqrt{\Delta}})/({2a})]^k-[(\lambda - b -\sqrt{\Delta})/(2a)]^k\} && \mbox{ if } \lambda>\nu, \label{eqn:xkiq} 
\end{align}
for $k\geq1$, 
where $\eta$ is an arbitrary constant and $\Delta = (b - \lambda)^2 - 4ac$. 
\end{prop}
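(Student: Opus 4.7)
The plan is to translate $\vc x M = \lambda \vc x$ into explicit component equations. The tridiagonal structure of $M$ yields the boundary equation $(\lambda - b)x_1 = a x_2$ and the second-order linear recurrence $a x_{k+1} = (\lambda - b)x_k - c x_{k-1}$ for $k \geq 2$. The characteristic equation $a r^2 - (\lambda - b) r + c = 0$ has discriminant $\Delta = (\lambda - b)^2 - 4ac$, which is nonnegative precisely when $\lambda \geq b + 2\sqrt{ac}$ (the sign restriction $\lambda \geq \nu > b$ rules out the other branch of the quadratic condition).

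The explicit formulas then follow by solving the recurrence. For $\lambda = \nu$, the double root is $r = \sqrt{c/a}$ and the general solution of the recurrence is $(A + Bk)r^k$; substituting into the boundary condition forces $A = 0$ and delivers \eqref{eqn:xkeq}. For $\lambda > \nu$, the two roots $r_{1,2} = \bigl((\lambda - b) \pm \sqrt{\Delta}\bigr)/(2a)$ are real and strictly positive (from $r_1 r_2 = c/a > 0$ and $r_1 + r_2 > 0$), and the boundary equation forces the two coefficients to be opposite in sign, which yields \eqref{eqn:xkiq}. Strict positivity of $x_k$ is then automatic since $r_1 > r_2 > 0$ and $r > 0$ respectively.

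To identify the convergence norm, I would combine this existence statement with Seneta's characterisation. The fact that $\vc x M = (b + 2\sqrt{ac}) \vc x$ admits a strictly positive solution gives $\nu \leq b + 2\sqrt{ac}$. For the reverse inequality, I would use the truncation argument already invoked before Proposition~\ref{locextcrit}: each $k \times k$ north-west truncation $M^{(k)}$ is diagonally similar, via $\diag((a/c)^{j/2})$, to a symmetric tridiagonal Toeplitz matrix with diagonal $b$ and off-diagonal entries $\sqrt{ac}$, whose spectrum is the classical set $\{b + 2\sqrt{ac}\cos(j\pi/(k+1)) : 1 \leq j \leq k\}$. Hence $\spc(M^{(k)}) = b + 2\sqrt{ac}\cos(\pi/(k+1)) \nearrow b + 2\sqrt{ac}$, and by Seneta's Theorem~6.8 this limit equals $\nu$.

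For summability, the two cases are handled separately. In the double-root case, $x_k = \eta k (\sqrt{c/a})^k$ is summable iff $c < a$. In the distinct-roots case the dominant behaviour is $r_1^k$, and a direct algebraic manipulation shows $r_1 < 1$ iff $\lambda < a + b + c$; since $r_2 < r_1$, the series then converges unconditionally. Moreover the interval $[\nu, a+b+c)$ is nonempty only when $2\sqrt{ac} < a + c$, i.e.\ $a \neq c$, so together with the $\lambda = \nu$ analysis the summability condition consolidates into $a > c$ and $\lambda \in [\nu, a+b+c)$. I expect the most delicate step to be the identification $\nu = b + 2\sqrt{ac}$ via the truncation spectra, since it hinges on the diagonal symmetrisation trick and on Seneta's monotone convergence theorem for north-west truncations.
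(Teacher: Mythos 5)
Your treatment of the eigenvector equation is essentially the paper's: the same boundary equation and second-order recurrence, the same characteristic roots, the same elimination of one constant via the boundary condition in both the double-root and distinct-root cases. Where you genuinely diverge is in identifying $\nu$. The paper invokes a min--max representation of $\spc(M^{(k)})$ from van Doorn \emph{et al.} and then argues the limit $b+2\sqrt{ac}$ by analogy with Latouche \emph{et al.}, whereas you symmetrise $M^{(k)}$ by the diagonal similarity $\diag((a/c)^{j/2})$ and read off the classical Toeplitz spectrum $\{b+2\sqrt{ac}\cos(j\pi/(k+1))\}$, so that $\spc(M^{(k)})=b+2\sqrt{ac}\cos(\pi/(k+1))\nearrow b+2\sqrt{ac}$; both routes then conclude with Seneta's Theorem~6.8. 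Your route is more self-contained and gives the truncation spectra exactly rather than asymptotically, at the cost of being specific to the Toeplitz structure; the paper's min--max route is the one that generalises to non-homogeneous tridiagonal $M$. One further difference: the paper also works through the case $\Delta<0$ to show the oscillatory solutions cannot be positive; this is not needed for the statement as written and your omission of it is harmless.

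One step deserves tightening. The condition $r_1=\bigl((\lambda-b)+\sqrt{\Delta}\bigr)/(2a)<1$ is \emph{not} equivalent to $\lambda<a+b+c$ alone: since $\sqrt{\Delta}\geq 0$ you first need $\lambda<2a+b$ before squaring, and only then does the inequality reduce to $\lambda<a+b+c$. It is precisely the pair $\nu<\lambda<2a+b$, i.e.\ $b+2\sqrt{ac}<2a+b$, that forces $a>c$ (this is how the paper derives it). Your alternative argument --- nonemptiness of $[\nu,a+b+c)$ gives $a\neq c$, and the $\lambda=\nu$ case gives $a>c$ --- does not by itself rule out $a<c$ for the $\lambda>\nu$ branch, where the interval $[\nu,a+b+c)$ is still nonempty but $r_1\geq1$ throughout. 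The conclusion is unchanged, but the "direct algebraic manipulation" needs the extra inequality $\lambda<2a+b$ made explicit.
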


\begin{proof}
Let $M^{(k)}$ denote the $k \times k$ north-west truncations of $M$. Then, by a modification of van Doorn \emph{et al.} \cite[Theorem~1, Eqn~(9)]{dfz09} we obtain 
\begin{align*}
\spc(M^{(k)}) = \min_{\bs{u} \geq \bs{0}}  \max_{1 \leq i \leq k} \left\{ b + u_{i + 1} + \frac{ac}{u_i} \right\} .
\end{align*} 
Then, by Seneta~\cite[Theorem~6.8]{senetabook} 
\begin{align*}
\nu & = \lim_{ k \rightarrow \infty} \spc(M^{(k)})  =  \min_{\bs{u} \geq \bs{0}}  \sup_{i} \left\{ b + u_{i + 1} + \frac{ac}{u_i}  \right\} =  b + 2\sqrt{ac},
\end{align*} 
with the last equality following from arguments analogous to those in the proof of Theorem~3.2 in Latouche \emph{et al.} \cite{latouche11}.

Now, for any $\bs{x}$ to satisfy $\bs{x} M = \lambda \bs{x}$, its elements have to satisfy the constraints 
\begin{align}
bx_1  + ax_2 & = \lambda x_1  \label{eqn:con1} \\\nonumber
ax_{k + 1}  + (b - \lambda)x_k  + cx_{k - 1} & = 0 \quad \mbox{for } k \geq 2. 
\end{align} 
Let $\Delta = (b - \lambda)^2 - 4ac$. There are three cases, for each of which $\bs{x}$ takes a specific form (Korn and Korn~\cite[Chapter~20, Section~4.5]{kornkorn61}). 

\textbf{Case 1:} $\Delta = 0$. Then, $\lambda = b \pm 2\sqrt{ac}$, and for $k \geq 1$,
\begin{align}
x_k = c_1[(\lambda - b)/(2a)]^k + c_2 k [(\lambda - b)/(2a)]^k, \label{eqn:xkCase1}
\end{align}  
where $c_1$ and $c_2$ are constants. Substituting \eqref{eqn:xkCase1} into \eqref{eqn:con1} gives us $c_1 = 0$. To ensure $x_k > 0$ for all $k$, it is necessary that $\lambda > b$. Consequently, $\lambda = b + 2\sqrt{ac}$, and we obtain \eqref{eqn:xkeq}. For $\bs{x}$ to be convergent, we require that $\sqrt{ac}/a < 1$ and thus $a > c$. 

\textbf{Case 2:} $\Delta > 0$. Then, $\lambda < b - 2\sqrt{ac}$ or $\lambda > b + 2\sqrt{ac}$, and for $k \geq 1$,
\begin{align} \label{eqn:xkCase2} 
x_k = c_3[((\lambda - b + \sqrt{\Delta})/(2a)]^k + c_4[((\lambda - b - \sqrt{\Delta})/(2a)]^k, 
\end{align} 
where $c_3$ and $c_4$ are constants. Substituting \eqref{eqn:xkCase2} into \eqref{eqn:con1} gives us $c_3 = -c_4$, and \eqref{eqn:xkCase2} simplies to \eqref{eqn:xkiq}. It is clear from \eqref{eqn:xkiq} that $\bs{x} > \bs{0}$ if and only if $\lambda > b$.  Thus, $\lambda > b + 2\sqrt{ac}$. 

Finally, $\bs{x} \bs{1} < \infty$ if and only if $((\lambda - b) + \sqrt{\Delta})/(2a) < 1$, the latter being equivalent to $\lambda < 2a + b$ and $\lambda < a + b + c$. As $b + 2\sqrt{ac} < \lambda < 2a + b$, both $a < c$ and $a = c$ lead to a contradiction. Consequently, $a > c$.

\textbf{Case 3:} $\Delta < 0$. Then, $b - 2\sqrt{ac} < \lambda < b + 2\sqrt{ac}$ and 
\begin{align}
x_k = (c/a)^k(c_5 \cos (k \phi) + c_6 \sin (k \phi)), \label{eqn:xkCase3}
\end{align} 
where $\phi = \arccos(b/(2\sqrt{ac})), 0 < \phi < \pi$ and $c_5$ and $c_6$ are constants. 

Since we are looking for $\bs{x} > \bs{0}$, Case 3 is not feasible. Indeed, we can rewrite \eqref{eqn:xkCase3} as, for $k \geq 1$, $x_k = c_7 (c/a)^k \cos(c_8 + k \phi)$ where $0 \leq c_8 < 2 \pi$ and $c_7$ is arbitrary.  It can be easily shown that there exists $k_0$ such that $\cos c_8$ and $\cos (c_8 + k_0 \phi)$ have different signs. 
\end{proof}

Among the progeny distributions that may be associated with the mean progeny matrix given in \eqref{trid}, we choose 
\begin{align*}
P_i(\vc s) & =  ({b}/{t}) s_i^t + ({c}/{t})s_{i+1}^t+1-({b+c})/{t} &&  \mbox{for }  i=1,\\
& = ({a}/{u})s_{i-1}^u + ({b}/{u}) s_i^u + ({c}/{u}) s_{i+1}^u+1- ({a+b+c})/{u} && \mbox{for } i\geq 2,
\end{align*}
where $t=\lceil b+c \rceil+1$ and $u=\lceil a+b+c \rceil+1$. By varying $a,b,$ and $c$, we shall cover the three possible cases $\vc q=\tilde{\vc q}<\vc1$, $\vc q=\tilde{\vc q}=\vc1$, and $\vc q<\tilde{\vc q}=\vc1$.  

\paragraph{Case 1: $\vc q=\tilde{\vc q}<\vc1$.} Take  $a= b=1/2$, and $c=1/3$. With these, $\nu=1.28>1$, and $\vc q=\tilde{\vc q}<\vc1$ by Lemma \ref{lem1} and Proposition \ref{locextcrit}.

We illustrate in Figure~\ref{fig1} the convergence of the sequences $\{\vc q^{(k)}\}$ and $\{\tilde{\vc q}^{(k)}\}$.  On the left, we plot $q_1^{(k)}$ and  $\tilde{q}_1^{(k)}$ for $k= 1$ to 20; the two sequences rapidly converge to a common value approximately equal to 0.89. 
On the right, we plot $q_i^{(20)}$ and  $\tilde{q}_i^{(20)}$ for $i = 1$ to 20; we observe  that the first 15 entries are well-approximated after 20 iterations but the next entries require more iterations because for high values of $i$, the approximation process for $q_i$ and $\tilde{q}_i$ starts with a higher value of $k$. 

A sequence $\{x_k\}_{k\geq0}$ converges linearly to $x$ if there exists $0<\mu<1$ such that 
$ \lim_{k\rightarrow\infty} {|x-x_{k+1}|}/{|x-x_k|}=\mu$, and $\mu$ is called the convergence rate. Our numerical investigations indicate that the convergence of $q_i^{(k)}$ as well as that of  $\tilde{q}_i^{(k)}$ is linear, for fixed $i$.  We give one example in Figure~\ref{fig2}, where we plot the ratios $|q_1 - q_1^{(k+1)}|/|q_1 - q_1^{(k)}|$ and
$|\tilde q_1 - \tilde q_1^{(k+1)}|/|\tilde q_1 - \tilde q_1^{(k)}|$; not knowing the value of either $q_1$ or $\tilde q_1$, we have used the values at the 20th iteration. The two sequences are seen to converge linearly at the same rate $\mu = 0.26$ approximately.

\begin{figure}[!tbp]%
\centering 
\includegraphics[scale=0.3]{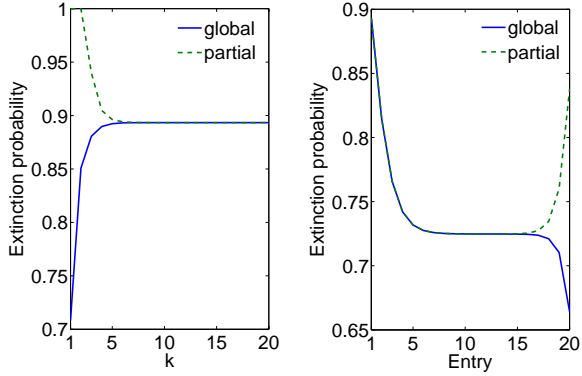}
\caption{Case 1. $a=b=1/2, $ and $c=1/3$. Left: the values of $q_1^{(k)}$ (continuous line) and of $\tilde{q}_1^{(k)}$ (dashed line).  Right: first entries of $\vc q^{(20)}$ (continuous line) and $\tilde{\vc q}^{(20)}$ (dashed line).}
\label{fig1}
\end{figure}

\begin{figure}[!tbp]%
\centering
\includegraphics[scale=0.3]{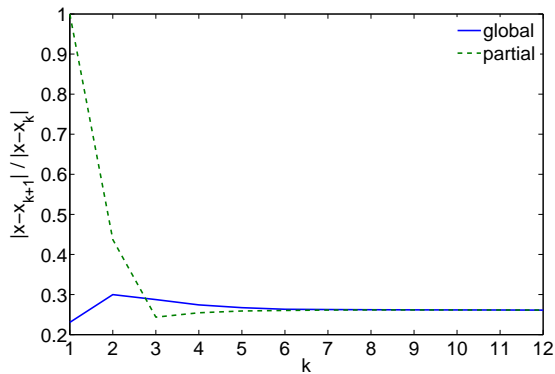}
\caption{Case 1. $a=b=1/2,$ and $c=1/3$. Convergence rates of the sequences $q_1^{(k)}$ (continuous line) and $\tilde{ q}_1^{(k)}$ (dashed line).}
\label{fig2}
\end{figure}

\paragraph{Case 2:  $\vc q = \tilde{\vc q} = \vc 1$} The parameters here are  $a=b=1/2$, and $ c=1/25$. Here ${{a>c}}$ and for any individual, the type of its descendants drifts over successive generations toward type 1, the least prolific of types.  The convergence norm is  $\nu=0.78<1$, which implies that $\tilde{\vc q}=\vc 1$.  We shall conclude from Proposition~\ref{propertyM} and Proposition~\ref{NC} (with $\lambda=\nu$) that $\vc q = \vc 1$ as well, once we show that the dichotomy property holds.  The progeny generating function is given by
\begin{align} 
   \label{e:pi}
P_i(\vc s) & = (1/4) s_i^2 + (1/50)s_{i+1}^2+(73/100) && \mbox{ for } i=1,\\
   \label{e:pia}
                & = (1/6) s_{i-1}^3 +(1/6) s_i^3 + (1/75) s_{i+1}^3+(49/75) && \mbox{ for } i\geq 2.
\end{align}
To verify that the dichotomy property holds, we use the sufficient condition~(\ref{dich}). 
In view of (\ref{e:pi}, \ref{e:pia}), we observe that for all $i$, $\mathds{P}[|\mathcal{Z}_{2}| = 0 \; | \;\varphi_0=i,\,1 \leq |\mathcal{Z}_1| \leq k]\geq (\min(73/100,49/75))^k,$ and we conclude that  \eqref{dich} is satisfied with $m_k=2$ and $d_k=(49/75)^k$.

\begin{figure}[!tbp]
\begin{center}\includegraphics[scale=0.3]{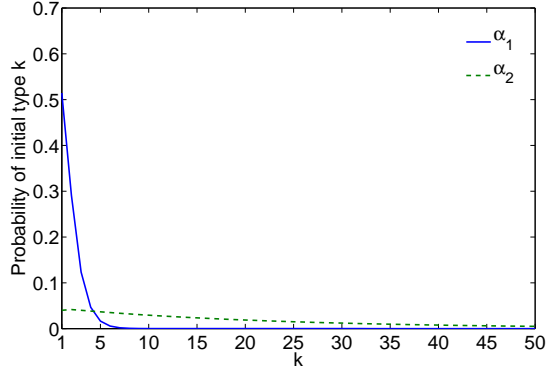}\end{center}
\caption{Case 2: $a= b=1/2$, $c=1/25$. The first $50$ components of the initial type distribution vector ${\vc\alpha}_1$, for which $\lim_n \mathbb{E}[|\mathcal{Z}_n|]=0$, and of  ${\vc\alpha}_2$, for which $\lim_n \mathbb{E}[|\mathcal{Z}_n|]=\infty$.}
\label{init}
\end{figure}

To illustrate the observation made in Subsection~\ref{sec:initial} about the effects of the initial type's distribution, we take the parameters 
\[
 \lambda_1=\nu=0.78<1 \qquad \mbox{and} \qquad \lambda_2=1.02<1.04 \;(=a+b+c).
\]
By Proposition~\ref{propertyM}, there exist $\vc\alpha_1>0$ and $\vc\alpha_2>0$ such that $\vc\alpha_1M=\lambda_1\vc\alpha_1$ and $\vc\alpha_1\vc 1=1$, and such that $\vc\alpha_2M=\lambda_2\vc\alpha_2$ and   $\vc\alpha_2\vc 1=1$. 

If $\varphi_0$ has the distribution $\vc\alpha_1$, then $\lim_n \mathbb{E}[|\mathcal{Z}_n|]=0$, while if it has the distribution  $\vc\alpha_2$, then $\lim_n \mathbb{E}[|\mathcal{Z}_n|]=\infty$.  In both cases, extinction is with probability 1.  We plot in Figure~\ref{init} the first 50 components of $\vc\alpha_1$ and $\vc\alpha_2$.  The difference between the two is that the distribution $\vc\alpha_1$ is concentrated on small types, so that the process has less chance of building a high population before its eventual extinction. 

\paragraph{Case 3: $\vc q  \leq \tilde{\vc q} = \vc 1$.} 
Take $a=1/25,  b= c=1/2$.
Here, $a <c $ and $\nu=0.78<1<a+b+c$; thus, $\vc q\leq\tilde{\vc q}=\vc 1$ but we do not know if $\vc q =\vc 1$ or not. 

We show on  the left of Figure~\ref{fig4} the values of $q_1^{(k)}$ and $\tilde q_1^{(k)}$ for $k=1$ to 60.  Judging from this, we conclude that $q_1 < 1 = \tilde q_1$.   On the right of that figure, we give $q_i^{(60)}$ and $q_i^{(60)}$ for $i=1$ to 60.

\begin{figure}[!tbp]
\begin{center}\includegraphics[scale=0.3]{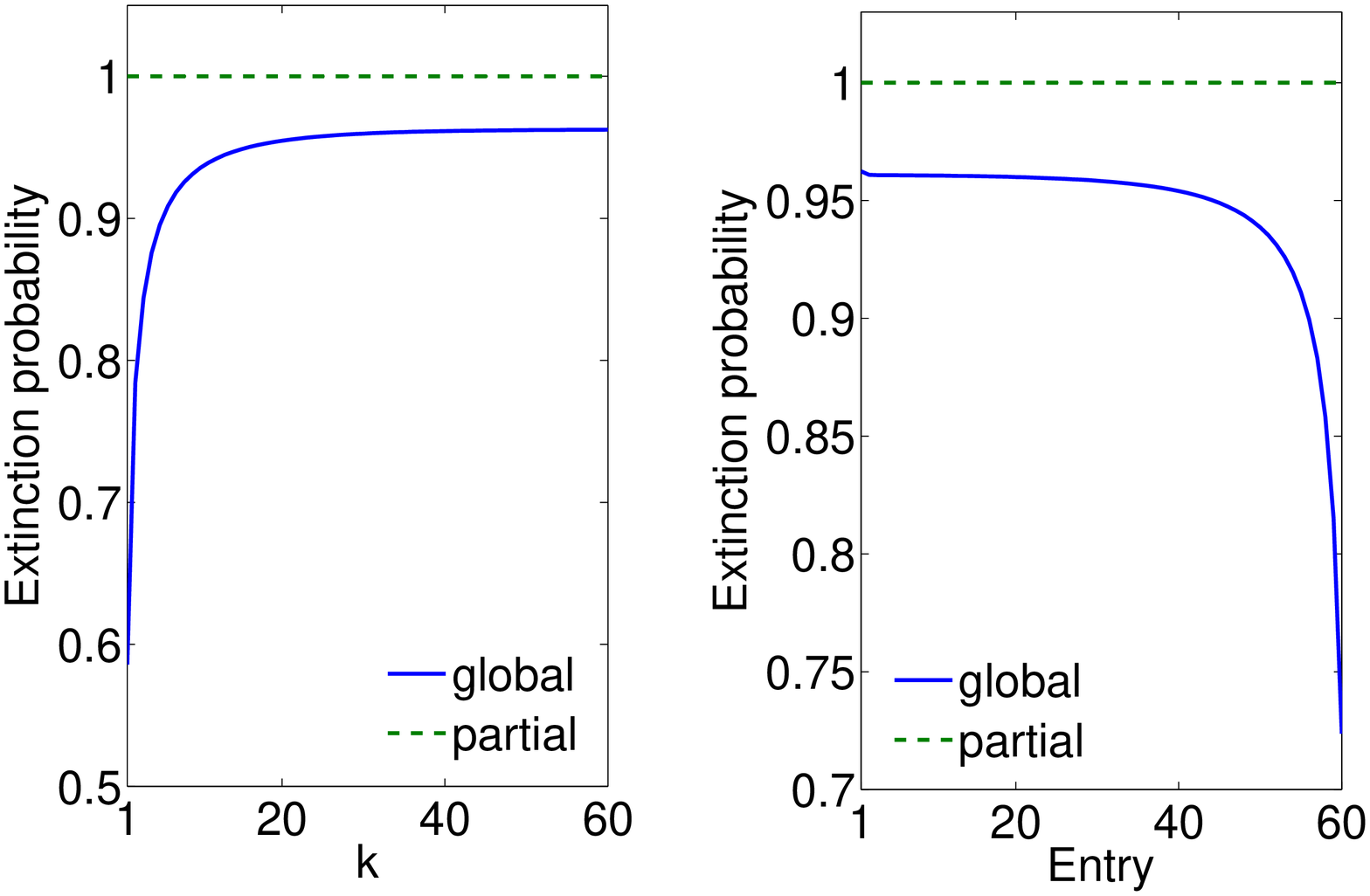}\end{center}
\caption{Case 3: $a=1/25,  b=c=1/2$. 
Left: the values of $q_1^{(k)}$ (continuous line) and of $\tilde{q}_1^{(k)}$ (dashed line).  Right: first entries of $\vc q^{(60)}$ (continuous line) and $\tilde{\vc q}^{(60)}$ (dashed line).}
\label{fig4}
\end{figure}

\begin{figure}[!tbp]
\begin{center}\includegraphics[scale=0.3]{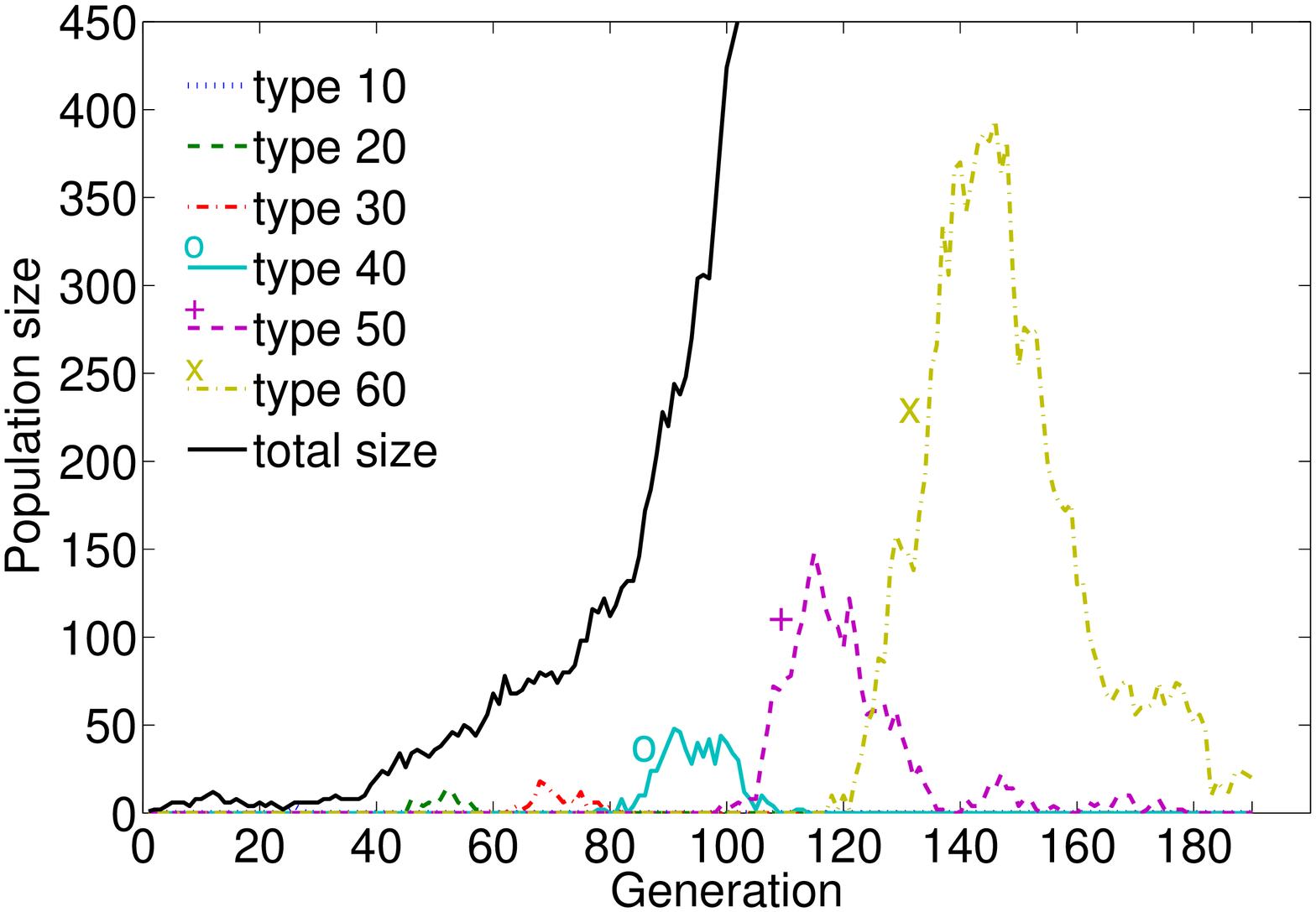}\end{center}
\caption{Case 3. $a=1/25,  b=c=1/2$. Simulation of the evolution of the population size in different types and of the total population size}
\label{fig5}
\end{figure}

To confirm the conclusion that  $\vc q < \tilde{\vc q} = \vc 1$, we have simulated the branching process and we give one particular sample path on
Figure~\ref{fig5}: the whole population $|\calz_n|$ seems to grow without bounds, while individual types appear, grow in importance, and eventually disappear from the population.

\subsection{Reducible example}

Consider the mean progeny matrix with the structure
\begin{align*} 
M& = \left[\begin{array}{cccccc} 
b_1 & \;\; c_1 & \\
 & \;\; b_2 & \;c_2 \\ 
   & \;\;  & \;b_3 & c_3 \\
    &   &  & \ddots & \ddots 
\end{array}\right],
\end{align*} 
where $b_i\geq 0$ and $c_i > 0$ for all $i$.

In this special case of reducible mean progeny matrix we may associate another interpretation to the sequence $\{\tilde{\vc q}^{(k)}\}$.  Let us define the \emph{local extinction} of a specific type. This event is $E_k = [\lim_{n\rightarrow\infty} Z_{nk}=0]$, independently of the other types.

A moment of reflection shows that $E_k\equiv \cap_{\ell\leq k} E_\ell$
and, furthermore, that $\tilde{q}^{(k)}_i$ is the probability that
type $k$ eventually becomes extinct, given that the process starts
with a first individual of type $i$.  This allows us to give another
proof that $\tilde{\vc q}^{(k)}\geq \tilde{\vc q}^{(k+1)}$ and that the sequence  converges to $\tilde{\vc q}$:
\begin{align*} \lim_{k\rightarrow\infty} \tilde{\vc q}^{(k)} & = \lim_{k\rightarrow\infty}\mathds{P}[E_k\,|\,\varphi_0] \  = \lim_{k\rightarrow\infty}\mathds{P}[\cap_{\ell\leq k} E_\ell\, |\,\varphi_0] \\
& = \mathds{P}[\cap_{\ell\leq \infty} E_\ell\,\Big|\,\varphi_0] \  =\tilde{\vc q}.
\end{align*}
In the reducible case, the equation $\vc s=\vc P(\vc s)$ may have more than two distinct solutions and, in particular, it is possible that \mbox{$\vc q<\tilde{\vc q}< \vc 1$}, as we show on one example.

Take $b_i=0$ and $c_i=1.9$ for every $i$ except for $i = 10$, where $b_{10}=1.6$ and $c_{10}=0.8$. That is, in general, type $i\neq10$ individuals have only children of the next type,  slightly less than two on average, and
type 10 is different. If it were not for type 10, the whole population would behave as a supercritical process, with each type getting extinct after one generation. Individuals of type 10 do reproduce themselves, in a supercritical fashion.

Assume that the progeny generating function is
\begin{align*} 
P_i(\vc s) & = (19/30)s_{i+1}^3+(11/30) && \mbox{ for } i\neq 10,\\
                & = (2/5) s_i^4 + (1/5) s_{i+1}^4+(2/5)  && \mbox{ for } i=10.
\end{align*} 
As the sequence $\{\spc(M^{(k)})\}$ converges to $\bar{\nu}=1.6>1$, we know by Proposition~\ref{locextcrit2} that $\tilde{\vc q} \lneqq\vc 1$. Furthermore,  Proposition~\ref{locextcrit3} implies that $\tilde{q}_i<1$ for $1\leq i\leq 10$, and $\tilde{q}_i=1$ for $i\geq 11$.

We show $\{q_8^{(k)}\}$ and $\{\tilde{q}_8^{(k)}\}$
on the left in  Figure~\ref{fig6} and the plot  clearly makes it appear that $q_8 < \tilde q_8 < 1$.  On the right, we give the values of $q_i^{(30)}$ and $\tilde q_i^{(30)}$ for $1 \leq i \leq 30$.  For $i \geq  11$, local extinction has probability 1 since every type exists for one generation only, and the global probability, at least if $i$ is sufficiently smaller than 30, is close to 0.41, the extinction probability of a single-type branching process with 
 progeny generating function 
\begin{align*} 
P(s) & = (19/30)s^3+(11/30).
\end{align*} 
We thus see that if extinction happens in the single-type process, then it does so in a few generations.
%
\begin{figure}
\begin{center}\includegraphics[scale=0.30]{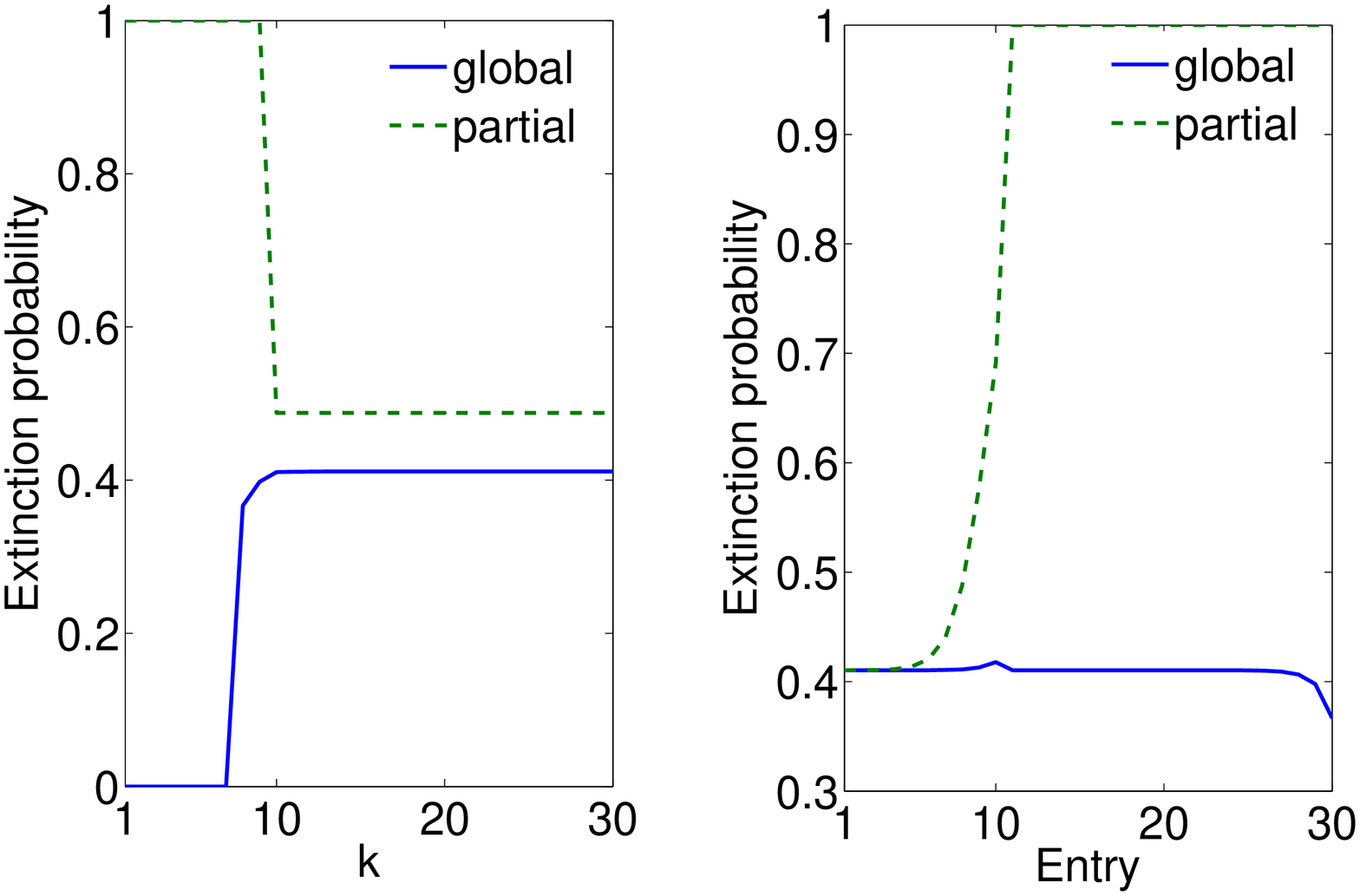}\end{center}
\caption{Left: the values of $q_8^{(k)}$ (continuous line) and of $\tilde{q}_8^{(k)}$ (dashed line).  Right: first entries of $\vc q^{(30)}$ (continuous line) and $\tilde{\vc q}^{(30)}$ (dashed line).}
\label{fig6}
\end{figure}

%
%
%
%
%
%
%



\acks

All three authors thank the Minist\`ere de la Communaut\'e fran\c{c}aise de
Belgique for funding this research through the ARC grant AUWB-08/13--ULB~5.  
 The first and third authors also acknowledge the financial support of the Australian Research Council through the Discovery Grant DP110101663. 

%
%
%
%

\bibliographystyle{apt}
\bibliography{BP_infinity_bib} 

%
%
%
%

\end{document}